\newtheorem{theorem}{Theorem}
\newtheorem{lemma}[theorem]{Lemma}
\begin{document}
\title{Random walk in the low disorder ballistic regime}

\author{A. F. Ram\'\i rez}

\address{Facultad de Matem\'aticas, Pontificia Universidad
Ca\'olica de Chile,\\
Santiago, Regi\'on Metropolitana 7820436, Chile\\
E-mail: aramirez@mat.uc.cl\\
www.uc.cl}

\begin{abstract}We consider a random walk in 
 $\mathbb Z^d$ which  jumps from a site $x$ to
  a nearest neighboring site $x+e$ (where $e\in V:=\{x\in\mathbb Z^d: |x|_1=1\}$) with 
probability $p_0(e)+\epsilon\xi(x,e)$. Here
$\sum_e p_0(e)=1$, $p_0(e)> 0$, $\epsilon$ is
a small parameter while $\{\{\xi (x,e):e\in V\}: x\in\mathbb Z^d\}$
are i.i.d. random variables with an absolute value bounded by $1$.
We review recent progress in the non-vanishing velocity case, giving an asymptotic expansion in
$\epsilon$ of the invariant measure of the environmental process,
and bounds for the velocity.
\end{abstract}

\keywords{Random walk in random environment; Ballisticity; Invariant probability measure; low disorder; asymptotic expansions.}

\subjclass[2010]{60K37, 82D30, 82C41.}


\maketitle

\section{Introduction}\label{aba:introduction}

Random perturbations of random walks have attracted
attention both from the mathematical and physical literature,
as a natural model of movement in a disordered media (see
for example the reviews Ref. \cite{dr14} and \cite{z06}).
Fundamental questions about its behavior remain open \cite{dr14,z06}.
In this article, we will review the actual state of understanding
of the so called ballistic case (non-vanishing velocity) 
at low disorder and will specially describe
recent results obtained providing asymptotic expansions for the velocity and the invariant
measure \cite{cr15,lrs16}.

For any $x\in\mathbb Z^d$, define as $|x|_1$ its $l_1$ norm.
Let $V:=\{e\in\mathbb Z^d:|e|_1=1\}$ and $\mathcal P:=\{p=\{p(e):e\in V\}:
p(e)\ge 0, \sum_{e\in V}p(e)=1\}$. We define
 $\Omega:=\mathcal P^{\mathbb Z^d}$ and call it the {\it environmental space}.
For each environment $\omega\in\Omega$ and $x\in\mathbb Z^d$, we define
the {\it random walk in the environment $\omega$} starting from $x$, as the
Markov chain with transition probabilities

\[
P_{x,\omega}(X_{n+1}=y+e|X_n=y)=\omega(y,e)\qquad {\rm for}\quad y\in
\mathbb Z^d, e\in V.
\]
Let $\mathbb P$ be a probability measure defined on $\Omega$,
  denote by $\mathbb E$ the corresponding
expectation associated to $\mathbb P$ and call $P_{x,\omega}$ the {\it quenched law}
of the random walk in random environment and refer to the semi-direct product $P_x:=\mathbb P\otimes P_{x,\omega}$ as
the {\it averaged} or {\it annealed} law of the random walk
starting from $x$.
Throughout this article, we will
assume that under $\mathbb P$ the random variables
$\{\omega(x):x\in\mathbb Z^d\}$
are i.i.d.

\medskip

We say that a random walk in random environment
is {\it ballistic} if $\mathbb P_0$-a.s. we have that

\begin{equation}
\label{ball}
\liminf_{n\to\infty}\frac{X_n}{n}>0.
\end{equation}
It is well known (see for example \cite{dr14}) that whenever
a random walk in random environment is ballistic, the {\it velocity}

\[
v:=\lim_{n\to\infty}\frac{X_n}{n}
\]
exists.

We will consider perturbations of the dynamics of a 
random walk in a deterministic environment with jump rates
$p_0\in\mathcal P$ at every site, where

\begin{equation}
\label{pnot}
\mathcal P_0:=\{p\in\mathcal P:\min_{e\in V}p(e)>0\}.
\end{equation}
Given $\epsilon>0$, 
define

\[
\Omega_{p_0,\epsilon}:=\left\{\omega\in\Omega:\sup_{x\in\mathbb Z^d, e\in V}|\omega(x,e)-p_0(e)|\le\epsilon\right\}.
\]
Furthermore, we will assume that $\epsilon>\min\{p_0(e):e\in V\}$ and define the constant

\[
\kappa:=\min\{p_0(e):e\in V\}-\epsilon,
\]
which is a lower bound for the jump probabilities of environments
$\omega\in\Omega_{p_0,\epsilon}$, so that $\omega(x,e)\ge\kappa>0$ for
every $x\in\mathbb Z^d$ and $e\in V$.

 An important quantity which will somehow give us
a control on how close we are to a possible regime of vanishing
velocity is the {\it local drift} of the random walk, defined
on an environment $\omega$ and at a point $x\in\mathbb Z^d$ as

\begin{equation}
\label{ld}
d(x,\omega):=\sum_{e\in V}e\omega(x,e).
\end{equation}
Throughout this article, we will consider three different conditions on the local drift.

\medskip

\noindent {\bf Linear local drift condition} (LLD). {\it Let $C>0$, $\epsilon>0$ and $p_0\in
\mathcal P_0$. We say that
the environmental law $\mathbb P$ satisfies the quadratic local drift condition (LLD)$_\epsilon$
if $\mathbb P(\Omega_{p_0,\epsilon})=1$
and

\begin{equation}
\label{LLD}
\mathbb E\left[d(0,\omega)\cdot e_1\right]> C\epsilon.
\end{equation}
}
\medskip

\noindent {\bf Quadratic local drift condition} (QLD). {\it Let $C>0$, $\epsilon>0$ and $p_0\in
\mathcal P_0$. We say that
the environmental law $\mathbb P$ satisfies the quadratic local drift condition (QLD)$_\epsilon$
if $\mathbb P(\Omega_{p_0,\epsilon})=1$
and

\begin{equation}
\label{QLD}
\mathbb E\left[d(0,\omega)\cdot e_1\right]> C\epsilon^2.
\end{equation}
}
\medskip

\noindent For the particular case of perturbations around the simple symmetric random walk, we will
introduce the following.

\medskip

\noindent {\bf Local drift condition} (LD). {\it Let $\eta,\epsilon>0$ and $p_0=\frac{1}{2d}$. We say that
an environment law $\mathbb P$ satsifies the local drift condition (LD)  if $\mathbb P(\Omega_{p_0,\epsilon})=1$
and

\begin{equation}
\label{LD}
\mathbb E\left[d(0,\omega)\cdot e_1\right]> C\epsilon^{\alpha(d)-\eta},
\end{equation}
where

\[
\alpha(d):=
\begin{cases}
2 &\ {\rm if}\ d=2\\
2.5 &\ {\rm if}\ d=3\\
3 &\ {\rm if}\ d\ge 4.
\end{cases}
\]

}
\medskip

Given an environment in $\omega\in\Omega_{p_0,\epsilon}$, we
will define

\[
\xi(x,e):=\frac{1}{\epsilon}\left(\omega(x,e)-p_0(e)\right),
\quad{\rm for}\quad x\in\mathbb Z^d, e\in V,
\]
and use the more suggestive writings

\begin{equation}
\label{suggestive2}
\omega(x,e)=p_0(e)+\epsilon\xi(x,e)\quad {\rm and}\quad
\omega(x,e)=p_\epsilon(e)+\epsilon\bar\xi(x,e),
\end{equation}
where $p_\epsilon(e):=p_0(e)+\mathbb E[\xi(0,e)]$ and $\bar\xi(x,e):=\xi(x,e)-p_1(e)$.

\medskip

\section{Invariant measures and a  heuristic derivation
of their expansion}
A fundamental and useful concept in the understanding of the
behavior of a random walk in random environment is
the invariant measure of the environment seen from the
position of the random walk \cite{dr14}. Here we will show
how to formally derive an expansion in the parameter
$\epsilon$ of it \cite{cr15}.

For each $x\in\mathbb Z^d$, we define the  shift  $t_x:\Omega\to\Omega$
as $t_x\omega(y):=\omega(x+y)$ for each $y\in\mathbb Z^d$.
Now, given an environment $\omega\in\Omega$, we define
the {\it environment seen from the random walk}
or the {\it environmental process} as

\[
\bar\omega_n:=t_{X_n}\omega\quad {\rm for}\quad n\ge 0.
\]
Note that $\{\bar\omega_n:n\ge 0\}$ is a Markov process
with state space $\Omega$ and transition kernel
defined for $f:\Omega\to\mathbb R$ by

\[
Rf(\omega):=\sum_{e\in V}\omega(0,e)f(t_e\omega).
\]
We say that a probability measure $\mathbb Q$ defined in
$\Omega$ is an {\it invariant} measure for the environmental process if for every bounded measurable function
$f:\Omega\to\mathbb R$ we have that
$\int Rfd\mathbb Q=\int fd\mathbb Q$.
Now, in the case of an environment $\omega\in\Omega_{p_0,\epsilon}$,
by (\ref{suggestive2}), we can write the transition kernel
as

\begin{equation}
\label{ere}
R=R_0+\epsilon A,
\end{equation}
where for $f:\Omega\to\Omega$ measurable we have

\[
R_0f(\omega)=\sum_{e\in V}p_\epsilon(e)f(t_e\omega)\quad{\rm and}\quad
Af(\omega)=\sum_{e\in V}\bar\xi(0,\omega)f(t_e\omega).
\]
Assume now that the measure $\mathbb Q$ is absolutely continuous
with respect to $\mathbb P$ with a Radon-Nykodim derivative $h$.
We then have that

\begin{equation}
\label{inv11}
\int R fhd\mathbb P=\int fhd\mathbb P.
\end{equation}
If we furthermore assume that $h$ has the analytic expansion

\begin{equation}
\label{expansiong}
h=\sum_{i=0}^\infty\epsilon^ih_i,
\end{equation}
we see inserting (\ref{ere}) and (\ref{expansiong})  into (\ref{inv11})
 that necessarily $h_0=1$ and that for each $i\ge 0$ one should have that

\begin{equation}
\label{gek}
h_{i+1}=-(R_0^*-I)^{-1}A^*h_i,
\end{equation}
where $R_0^*$ and $A^*$ denote the adjoint of $R_0$ and $A$
respectively with respect to the measure $\mathbb P$. Now,

\[
(R_0^*-I)^{-1}f(\omega)=-\sum_{z\in\mathbb Z^d} G^{p_\epsilon}(z,0)f(t_z\omega)
\quad{\rm and}\quad
A^*f(\omega)=\sum_{e\in V}\bar\xi(-e,e)f(t_e\omega),
\]
where for each $x,y\in\mathbb Z^d$ we define the Green function
$G^{p_\epsilon}(x,y)$ as the expected number of visits to site $y$ starting from site $x$
of a random walk with transition kernel $p_\epsilon$.
It follows that  
\(
h_1=\sum_{z\in\mathbb Z^d, e\in V}\bar\xi(z,e)G^{p_\epsilon}(0,z+e),
\)
and hence

\begin{equation}
\label{fe}
h=1+\epsilon\sum_{z\in\mathbb Z^d, e\in V}\bar\xi(z,e)G^{p_\epsilon}(z+e,0)
+O(\epsilon^2).
\end{equation}
Of course, for the above formal expansion to make sense, we will
need to require certain conditions on the random walk which
will ensure the existence of a non-vanishing speed.

\medskip

\section{Asymptotic expansion of the invariant measure}
Here we will show in what sense is the formal
expansion derived in the previous section valid.
Although it does not seem possible in general to
obtain an analytic expansion of the Radon-Nykodim
derivative of the invariant measure of the environmental
process as given by (\ref{expansiong}) and (\ref{gek}), 
 under
the lineal local drift condition  (LLD), it is possible
to derive an asymptotic expansion at least up to
first order.
Given a $B\subset\mathbb Z^d$ and a probability
measure $\mathbb S$ in $\Omega$, we define
$\mathbb S_B$ as the restriction of $\mathbb S$ to
$\mathcal P^B$, and with a slight abuse of
language we will call it just
the {\it restriction of $\mathbb S$ to $B$}.
Furthermore, we define for each $p\in\mathcal P$, $x\in\mathbb Z^d$ and
$e\in V$,

\[
J_{p}(x):=\lim_{n\to\infty}\sum_{k=0}^n(p_k(0,-x)-p(0,0)),
\]
where for each $n\ge 0$ and $x,y\in\mathbb Z^d$, we define
$p_n(x,y)$ as the probability that a random walk with transition
kernel $p$ jumps from $x$ to $y$ after $n$ steps.
The following theorem has been proved in Ref. \cite{cr15}.

\smallskip

\begin{theorem}[Campos-Ram\'\i rez]
\label{theorem1}
 Let $\eta>0$ and $B$  finite subset of $\mathbb Z^d$.
Then, there is an $\epsilon_0>0$
such that whenever $\epsilon\le \epsilon_0$,
 $p_0\in\mathcal P_0$, and $\mathbb P$ satisfies the
lineal local drift condition (LD) [c.f. (\ref{LLD})],
the limiting invariant measure $\mathbb Q$ has
a restriction $\mathbb Q_B$ to  $B$ which 
is absolutely continuous with respect to the restriction
$\mathbb P_B$ to $B$ of $\mathbb P$, with a
Radon-Nikodym derivative admitting $\mathbb P$-a.s. the expansion

\begin{equation}
\label{expansion1}
\frac{d\mathbb Q_B}{d\mathbb P_B}=1+\epsilon\sum_{z\in B}\sum_{e\in V}
\bar\xi(z,e) J_{p^*_\epsilon}(e+z)+ O\left(\epsilon^{2-\eta}\right),
\end{equation}
 where $\left|O\left(\epsilon^{2-\eta}\right)\right|\le c_1\epsilon^{2-\eta}$, for
some constant $c_1=c_1(\eta,\kappa,d,B)$ depending only on $\eta$, $\kappa$, $d$ and $B$.
Here $p^*_\epsilon(e):=p_\epsilon(-e)$ for each $e\in V$.
\end{theorem}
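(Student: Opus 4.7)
The plan is to derive the expansion by combining a perturbative analysis of the invariance equation $R^{*} h = h$ with a conditional-expectation argument that restricts to the finite set $B$, and then to control the remainder using the independence of the $\bar\xi(z, e)$. Under condition (LLD)$_\epsilon$ and for $\epsilon$ small enough, the environmental process admits a unique invariant probability measure $\mathbb{Q}$ absolutely continuous with respect to $\mathbb{P}$, with density $h = d\mathbb{Q}/d\mathbb{P}$ satisfying $R^{*} h = h$. Writing $R^{*} = R_0^{*} + \epsilon A^{*}$ and $h = 1 + \epsilon h_1 + r_\epsilon$, the invariance relation gives at first order the linear equation $(I - R_0^{*}) h_1 = A^{*} \cdot 1$. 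Since $A^{*} \cdot 1$ is cylindrical and $\mathbb{P}$-centered, I would invert $(I - R_0^{*})$ on it via the Neumann series $\sum_n R_0^{*n}$, which reduces to summing transition probabilities of the reversed kernel $p^{*}_\epsilon$; a direct translation-invariance computation then yields
$$ h_1(\omega) = \sum_{z \in \mathbb{Z}^d,\, e \in V} \bar\xi(z, e)\, J_{p^{*}_\epsilon}(e + z), $$
matching the leading-order term of \eqref{expansion1}, extended to all of $\mathbb{Z}^d$.

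To pass to the stated restriction, I would set $h_B := \mathbb{E}[h \mid \F_B]$ with $\F_B = \sigma(\omega(x) : x \in B)$; the product structure of $\mathbb{P}$ then identifies $h_B$ with $d\mathbb{Q}_B / d\mathbb{P}_B$. Because the $\bar\xi(z, \cdot)$ are independent across distinct $z \in \mathbb{Z}^d$ and each is centered, conditioning on $\F_B$ annihilates every contribution to $h_1$ with $z \notin B$, leaving precisely the sum over $z \in B$ that appears in the theorem.

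The main obstacle is to control $r_\epsilon$ at the announced order $\epsilon^{2-\eta}$. The operator $(I - R_0^{*})^{-1}$ is \emph{not} uniformly bounded in $\epsilon$: its kernel $J_{p^{*}_\epsilon}$ is the Green function of a walk whose drift is only of order $\epsilon$, and it blows up like $\epsilon^{-1}$ along the drift direction, so a naive iteration of the resolvent yields only an $O(1)$ bound on $r_\epsilon$, not $O(\epsilon^{2-\eta})$. The plan is to combine quantitative heat-kernel and Green-function estimates for $p^{*}_\epsilon$, which control $J_{p^{*}_\epsilon}(x)$ in terms of $|x|$ and $\epsilon$ up to sub-polynomial losses, with a variance-type argument exploiting the independence and boundedness of the $\bar\xi(z,e)$: in the iterated expansion the cross terms vanish when conditioned on $\F_B$, leaving diagonal sums involving $J_{p^{*}_\epsilon}$ that can be bounded at an $\epsilon^{-\eta}$ loss under (LLD)$_\epsilon$. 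The $\eta$ in the exponent reflects the borderline behaviour of these Green-function bounds, and the finiteness of $B$ keeps the implicit constant $c_1$ depending only on $\eta$, $\kappa$, $d$ and $B$; this closes the estimate $|r_\epsilon| \le c_1 \epsilon^{2-\eta}$ on $\F_B$-measurable observables and completes the proof.
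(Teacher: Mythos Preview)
Your first–order analysis reproduces exactly the heuristic the paper gives in Section~2: writing $R=R_0+\epsilon A$, positing $h=\sum_i\epsilon^ih_i$, solving $(I-R_0^*)h_1=A^*1$ via the Green/potential kernel of $p_\epsilon^*$, and then projecting onto $\F_B$ using the product structure of $\mathbb P$. However, the paper is explicit that this is only formal; the proof it describes takes a \emph{different} route. Instead of assuming a global density $h=d\mathbb Q/d\mathbb P$ and expanding the invariance equation, the paper represents the invariant measure concretely as the weak limit $\mu=\lim_{\delta\to1^-}\mu_\delta$, where $\mu_\delta$ is the Ces\`aro average of the environmental process stopped at an independent geometric time $\tau_\delta$ of parameter $1-\delta$, expressed through the killed Green function $g_\delta^\omega(0,x)$ [c.f.\ \eqref{ftx}--\eqref{repr}]. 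One then expands the \emph{quenched} Green function $g_\delta^\omega$ perturbatively around the deterministic Green function of $p_\epsilon$ and passes to the limit $\delta\to1^-$. This gives a direct handle on the finite-box density without ever postulating a global $h\in L^1(\mathbb P)$ or inverting $(I-R_0^*)$ on an unknown remainder.

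The substantive gap in your proposal is the remainder estimate. You correctly observe that $(I-R_0^*)^{-1}$ has operator norm of order $\epsilon^{-1}$ under (LLD)$_\epsilon$, so iterating $\epsilon(I-R_0^*)^{-1}A^*$ gives no gain. Your fix---that ``cross terms vanish when conditioned on $\F_B$, leaving diagonal sums''---does not close as stated: the equation $(I-R_0^*)r_\epsilon=\epsilon^2A^*h_1+\epsilon A^*r_\epsilon$ couples $r_\epsilon$ to itself through $A^*$, so conditioning on $\F_B$ cannot exploit independence of the $\bar\xi(z,\cdot)$ until you have already expanded $r_\epsilon$ as a convergent series in the $\bar\xi$'s, which is precisely what is in question. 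Even granting a formal Neumann expansion, the ``diagonal'' contributions at order $k$ involve sums like $\sum_z J_{p_\epsilon^*}(z)^k$, and the trivial bound $\sum_z J_{p_\epsilon^*}(z)\sim\epsilon^{-1}$ yields only $\epsilon^k\cdot\epsilon^{-(k-1)}=\epsilon$, not $\epsilon^{2-\eta}$; obtaining the extra factor requires sharp pointwise Green-function asymptotics that you allude to but do not supply. Finally, the theorem demands a $\mathbb P$-a.s.\ (i.e.\ $L^\infty$) bound on $d\mathbb Q_B/d\mathbb P_B$, whereas a variance argument naturally produces $L^2$ control; bridging this also needs an argument. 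The Ces\`aro--geometric representation in the paper sidesteps all of this by providing an explicit pre-limit quantity whose Green-function expansion can be bounded termwise.
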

\smallskip

\noindent As a corollary of Theorem \ref{theorem1}, it is
possible to rederive the asymptotic expansion of the
velocity already obtained by Sabot \cite{sa04} under the local drift
condition  (LD). Indeed, this
condition implies that the random walk is ballistic (see
Ref. \cite{sa04}) and hence by an argument presented by
Sznitman and Zerner in Ref. \cite{SZ99} that
the marginal law of the environmental process at time $n$ converges
to the invariant measure as $n\to\infty$ of Theorem \ref{theorem1}.
On the other hand since
$X_n-\sum_{i=0}^{n-1}d(0,\bar\omega_i),\ i\ge 0$,
is a martingale, 
we can deduce from Theorem \ref{theorem1}, the asymptotic expansion for the velocity \cite{sa04}

\begin{equation}
\label{vexp}
v=\int d(0,\omega) d\mathbb Q=d_0+\epsilon d_1+\epsilon^2 d_2^\epsilon+ O\left(\epsilon^{3-\eta}\right),
\end{equation}
where 

\begin{equation}
\label{d0}
d_0:=\sum_{e\in V}ep_0(e),\quad
d_1:=\sum_{e\in V}e\mathbb E[\xi(0,e)],
\end{equation}

\begin{equation}
\label{d2eps}
d_2^\epsilon:=\sum_{e\in V}ep_{2,\epsilon}\quad {\rm and}\quad
p_{2,\epsilon}:=\sum_{e,e'\in V}C_{e,e'}J_{p_\epsilon^*}(e)
\end{equation}
with $C_{e,e'}:=Cov(\xi(0,e),\xi(0,e'))$, while
$|O(\epsilon^{3-\eta})|_1\le c_1'\epsilon^{3-\eta}$ for some
constant $c_1'$. 

In dimensions $d\ge 2$, it is possible to expand $J_{p_\epsilon^*}$ in $\epsilon$
to rewrite the expansion (\ref{expansion1}) as

\begin{equation}
\label{expansion2}
\frac{d\mathbb Q_B}{d\mathbb P_B}=1+\epsilon\sum_{z\in B}\sum_{e\in V}
\bar\xi(z,e) J_{p^*_0}(e+z)+ O\left(\epsilon^{2-\eta}\right),
\end{equation}
 where $\left|O\left(\epsilon^{2-\eta}\right)\right|\le c_2\epsilon^{2-\eta}$, for
some constant $c_2=c_2(\eta,\kappa,d,B)$ depending only on $\eta$, $\kappa$, $d$ and $B$.
Note that in the particular case when $p_0$ are the jump probabilities of a simple
symmetric random walk and the dimension $d=2$, as shown in
Ref. \cite{cr15}, we can use explicit
expressions for the potential kernel of the random walk (see Ref. 
\cite{McW40,sp64}), to obtain for example the following explicit expansion from
 (\ref{expansion2}),

\[
\frac{d\mathbb Q_{z_0,z_1}}{d\mathbb P_{z_0,z_1}}=1-
\frac{4}{\pi}\left(\bar\xi(z_1,e_1)+\bar\xi(z_1,-e_1)\right)\epsilon
+\left(\frac{8}{\pi}-4\right)\bar\xi(z_1,e_2)\epsilon+O\left(\epsilon^{2-\eta}
\right),
\]
where $z_0:=(0,0)$ and $z_1:=(1,0)$.

To prove Theorem \ref{theorem1} in Ref. \cite{cr15}, we
first express the
  limiting invariant measure as a Ces\`aro average
up to a random time distributed as a geometric random variable.
Indeed, let $\delta\in (0,1)$, and $\tau_\delta$ be a 
random time with a geometric distribution of
parameter $1-\delta$, independent of the random walk
and of the environment. Define for $x,y\in\mathbb Z^d$
the Green function

\[
g^\omega_\delta(x,y):=E'_{x,\omega}\left[\sum_{n=0}^{\tau_\delta} 1_y(X_n)\right],
\]
where the expectation is taken both over $\tau_\delta$ and
the random walk. We now define the probability measure $\mu_\delta$
by the equality valid for every continuous function $f:\Omega\to\mathbb R$
as

\begin{equation}
\label{ftx}
\int fd\mu_\delta=\frac{\sum_{x\in\mathbb Z^d}\mathbb E[g_\delta^\omega(0,x)
f(t_x\omega)]}{\sum_{x\in\mathbb Z^d}\mathbb E[g_\delta^\omega(0,x)]}.
\end{equation}
It turns out that whenever the lineal  local drift condition
 (LLD) is satisfied, the limit

\begin{equation}
\label{repr}
\mu:=\lim_{\delta\to 1^-}\mu_\delta
\end{equation}
exists weakly and is the limiting invariant measure for the environmental
process. In fact, this is true only under the assumption
that the polynomial ballisticity condition $(P)_M$ for
$M\ge 15d+5$ is satisfied \cite{cr15}. In section \ref{tvsdbr}
we will give a precise definition of the polynomial ballisticity condition.
 The representation of
the limiting invariant measure given by (\ref{repr}) and (\ref{ftx})
is then used in Ref. \cite{cr15} to derive the asymptotic
expansion of Theorem \ref{theorem1}, through the 
use of Green function expansions. On the other hand, this
method produces badly converging series when the perturbation
is around a $p_0\in\mathcal P_0$
[c.f. (\ref{pnot})] which has a vanishing
local drift.

\medskip

\section{The very small drift ballistic regime} 
\label{tvsdbr}
It is natural
to wonder, up to which point can the lineal local drift condition  (LLD)
be relaxed, and still obtain the asymptotic
expansion of the invariant measure given by Theorem \ref{theorem1}
and of the velocity of Sabot \cite{sa04}. It is still an
open problem to give a complete answer to this question.
We can nevertheless easily prove that
 under the weaker quadratic local drift condition (QLD), one still has a ballistic behavior
whenever the unperturbed random walk has a vanishing velocity.

\medskip

\begin{lemma}
\label{lemma1} Let $d\ge 2$ and $\eta\in (0,1)$. Let $\epsilon\in (0,1)$
and consider a random walk
whose law satisfies
 the quadratic local drift condition  (QLD)
with constant $C:=2/\min_{e\in V}p_0(e)^2$.
Then  the random walk is ballistic.
\end{lemma}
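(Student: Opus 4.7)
The plan is to verify Kalikow's ballisticity condition in the direction $e_1$; combined with the uniform ellipticity $\omega(x,e)\ge\kappa$ inherited from $\omega\in\Omega_{p_0,\epsilon}$, this condition is well-known to imply ballisticity through the Sznitman--Zerner regeneration argument. Kalikow's condition asks that the auxiliary drift
\[
\hat d_B(x)\cdot e_1 \;:=\; \sum_{e\in V}(e\cdot e_1)\,\frac{\mathbb{E}[g_B^\omega(0,x)\,\omega(x,e)]}{\mathbb{E}[g_B^\omega(0,x)]}
\]
be uniformly bounded below by a positive constant as $B$ ranges over the finite subsets of $\mathbb{Z}^d$ containing $0$ and $x$ over $B$, where $g_B^\omega(0,x)$ is the Green function of the walk killed on exiting $B$.

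The key step is to separate the dependence of $g_B^\omega(0,x)$ on $\omega(x,\cdot)$ from that on the rest of the environment. A standard first-passage decomposition at the first visit to $x$ and the first return gives
\[
g_B^\omega(0,x) \;=\; \frac{P_0^{\omega'_x}(H_x<T_B)}{\sum_{e\in V}\omega(x,e)\,P_{x+e}^{\omega'_x}(H_x>T_B)},
\]
where $\omega'_x$ denotes the environment at all sites $y\neq x$, $H_x$ the first hitting time of $x$, and $T_B$ the first exit time from $B$. Since the events $\{H_x<T_B\}$ starting from $0$ and $\{H_x>T_B\}$ starting from $x+e$ do not involve any step out of $x$, both probabilities depend only on $\omega'_x$ and are therefore independent of $\omega(x,\cdot)$ under $\mathbb{P}$. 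Substituting into Kalikow's formula, performing the inner expectation over $\omega(x,\cdot)$ first, and expanding in $\epsilon$ via $\omega(x,e)=p_\epsilon(e)+\epsilon\bar\xi(x,e)$ while using the envelope bound $\sum_e\omega(x,e)h_e(\omega'_x)\ge\kappa\sum_e h_e(\omega'_x)$ (with $h_e(\omega'_x):=P_{x+e}^{\omega'_x}(H_x>T_B)$) to control remainders, one arrives at a representation of the form
\[
\hat d_B(x)\cdot e_1 \;=\; \mathbb{E}[d(0,\omega)\cdot e_1] \,+\, \rho_B(x),\qquad |\rho_B(x)|\le \epsilon^2/\min_{e\in V} p_0(e)^2
\]
uniformly in $B$ and $x$, provided $\epsilon$ is small enough. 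The QLD hypothesis with the prescribed constant $C=2/\min_{e\in V} p_0(e)^2$ then gives $\hat d_B(x)\cdot e_1\ge \epsilon^2/\min_e p_0(e)^2>0$ uniformly, so Kalikow's condition holds and the walk is ballistic.

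The hard part is the explicit error bound on $\rho_B(x)$: one has to track the $\epsilon^2$ contribution coming from the second-order Taylor expansion of $1/\sum_e\omega(x,e)h_e(\omega'_x)$ in the centred fluctuations $\bar\xi(x,e)$, and verify that the worst case over admissible $(h_e)\in[0,1]^V$ gives exactly the constant $1/\min_e p_0(e)^2$. This worst-case constant is precisely what fixes the factor $2$ in $C=2/\min_e p_0(e)^2$: the positive drift provided by QLD must overcome the induced second-order fluctuation of the Green function and still leave a strictly positive margin.
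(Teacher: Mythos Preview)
Your plan is essentially the paper's: both verify Kalikow's condition and extract the $O(\epsilon^2)$ error via the factorization of the killed Green function at $x$ that isolates the dependence on $\omega(x,\cdot)$. The paper streamlines this by quoting Sznitman's reduced form of the criterion (Ref.\ \cite{sz02}),
\[
\inf_{g\in[0,1]^V}\mathbb E\!\left[\frac{d(0,\omega)\cdot e_1}{\sum_{e\in V}\omega(0,e)g(e)}\right]>0,
\]
in which your random escape probabilities $h_e(\omega'_x)$ are already replaced by worst-case deterministic weights $g(e)\in[0,1]$; the independence and conditioning you describe are exactly what underlies this reduction. The $\epsilon^2$ control is then obtained not through a second-order Taylor expansion of $1/\sum_e\omega(0,e)g(e)$ but by splitting $d\cdot e_1=(d\cdot e_1)_+-(d\cdot e_1)_-$ and bounding the denominator from above and below by $\sum_e p_0(e)g(e)\pm\epsilon\sum_e g(e)$, which reaches the constant $2/\min_e p_0(e)^2$ in two lines. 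Your route is correct, just slightly heavier on bookkeeping.
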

\begin{proof}
 We use the notation $d=d(0,\omega)$ for
the local drift [c.f. (\ref{ld})]. 
 Let $G$ be the set of vectors
$g=\{g(e):e\in V\}$ such that each component is in $[0,1]$.
To prove that a random walk is ballistic, it is
enough to show that the so called {\it Kalikow's criterion} is
satisfied (see Ref. \cite{sz02,k81}):

\[
\inf_{g\in G}\mathbb E\left[\frac{d(0,\omega)\cdot e_1}{\sum_{e\in V}
\omega(0,e)g(e)}\right]>0.
\]
Now, for each $g\in G$
one has that if $\epsilon$ is small enough

\begin{eqnarray*}
&E\left[\frac{d\cdot e_1}{
\sum_{e\in V}\omega(0,e)g(e)}\right]
=
E\left[\frac{(d\cdot e_1)_+}{
\sum_{e\in V}\omega(0,e)g(e)}\right]-
E\left[\frac{(d\cdot e_1)_-}{
\sum_{e\in V}\omega(0,e)g(e)}\right]\\
&
\ge E\left[\frac{(d\cdot e_1)_+}{
\sum_{e\in V}p_0(e)g(e)+\epsilon
\sum_{e\in V}g(e)}\right]-
E\left[\frac{(d\cdot e_1)_-}{
\sum_{e\in V}p_0(e)g(e)-\epsilon
\sum_{e\in V}g(e)}\right]\\
&\ge
\frac{\lambda}{\sum_{e\in V}p_0(e)g(e)}-
\frac{\epsilon\sum_{e\in V}g(e)}{\left(\sum_{e\in V}p_0(e)g(e)\right)^2}\mathbb E[(d\cdot e_1)_++
(d\cdot e_1)_-]\\
&\ge
\frac{1}{\sum_{e\in V}g(e)}\left( \lambda -\epsilon\frac{\mathbb E[(d\cdot e_1)_++
(d\cdot e_1)_-]
}{\min_{e\in V}p_0(e)^2}
\right)
>\frac{1}{2d}\left(\lambda-\epsilon^2\frac{2}{\min_{e\in V}p_0(e)^2}\right)>0,
\end{eqnarray*}
where we have used that the last two inequalities are clearly satisfied if $\lambda> C\epsilon^2$
where $\lambda:=\mathbb E[d(0,\omega)\cdot e_1]$.
\end{proof}

\medskip

Let us now try to explore if in the case of perturbations of
random walks with vanishing velocity, it would be possible to
improve the bound of Lemma \ref{lemma1} under a local drift condition weaker than the
quadratic one. We will  look at the velocity expansion (\ref{vexp}) for the case
in which the perturbation is performed on a simple symmetric
random walk, so that $p_0(e)=1/2d$ for all $e\in V$.
 To simplify notation we define $J_0:=J_{p_0}$. Note that  the factor $d_2^\epsilon=\sum_{e\in V}ep_{2,\epsilon}$
[c.f. (\ref{d2eps})]
in the term of second order of (\ref{vexp}) can be expanded as

\[
p_{2,\epsilon}=\sum_{e,e'\in V}C_{e,e'}J_0(e)+O(\epsilon)=O(\epsilon),
\]
where we have used the isotropy of the Green function which implies that
$J_0(e)$ is independent of $e\in V$. Furthermore, since $d_0=0$ [c.f. (\ref{d0})] for a
simple symmetric random walk, it follows by the expansion (\ref{vexp}), that in this case
we have that

\begin{equation}
\label{vede}
v=\epsilon d_1+O(\epsilon^{3-\eta})=\mathbb E[d(0,\omega)]+O(\epsilon^{3-\eta}).
\end{equation}
From (\ref{vede}) we see that at least formally, the velocity
does not vanish as long as we have

\begin{equation}
\label{ed0}
\mathbb E[d(0,\omega)]\cdot e_1> \epsilon^{3-\eta},
\end{equation}
for some $\eta>0$, which is the local drift condition (LD) [c.f. (\ref{LLD})] for dimensions $d\ge 4$.
 The following theorem shows that at least in dimensions $d\ge 4$
  condition (\ref{ed0}) does imply that the random walk
is ballistic [c.f. (\ref{ball})]. 

\medskip

\begin{theorem}[Sznitman]
\label{theorem2} Let $d\ge 2$ and $\eta\in (0,1)$. There
exists an $\epsilon_0\in (0,1)$ such that for
every $\epsilon\in (0,\epsilon_0)$,
whenever a random walk 
which is a random perturbation of a simple symmetric random walk
has a law which satisfies
 the local drift condition  (LLD), it
is ballistic.
\end{theorem}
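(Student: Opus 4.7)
The strategy is to verify the polynomial ballisticity condition $(P)_M$ for some $M\ge 15d+5$, which is known to imply the law of large numbers \refeq{ball} with positive speed. Equivalently, one may target Sznitman's effective criterion $(T')$. In either case, the essential task is a one-scale estimate: for a suitable scale $L=L(\epsilon)$, the walk exits the slab $\{x:-L\le x\cdot e_1\le L\}$ through the face $\{x\cdot e_1=L\}$ with probability $\ge 1-L^{-M}$.

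The natural starting point is the martingale decomposition
\[
X_n\cdot e_1=M_n\cdot e_1+S_n,\qquad S_n:=\sum_{k=0}^{n-1}d(X_k,\omega)\cdot e_1.
\]
Here $M_n$ has bounded increments, so $|M_n\cdot e_1|=O(\sqrt n)$ up to logarithmic corrections by Azuma-Hoeffding. Setting $\lambda:=\E[d(0,\omega)\cdot e_1]\ge C\epsilon^{\alpha(d)-\eta}$ and rearranging over the local times $L_n(y):=\#\{k<n:X_k=y\}$,
\[
S_n-n\lambda=\sum_{y\in\Z^d}L_n(y)\bigl(d(y,\omega)\cdot e_1-\lambda\bigr).
\]
If this quantity is shown to be negligible compared to $n\lambda$ at an appropriate scale $n$, the drift term beats both the martingale and the environmental fluctuations, and the one-scale estimate follows.

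The environmental fluctuation is controlled through a second-moment bound. Since the variables $\bigl(d(y,\omega)\cdot e_1-\lambda\bigr)_{y\in\Z^d}$ are independent, centered, and of amplitude $O(\epsilon)$, an annealed variance computation --- after coupling the quenched trajectory with that of the simple symmetric random walk on scales where the drift has not yet taken effect --- gives
\[
\E\bigl[(S_n-n\lambda)^2\bigr]\le c\,\epsilon^2\,E_0^{\mathrm{ssrw}}\Bigl[\sum_{y\in\Z^d}L_n(y)^2\Bigr],
\]
and the right-hand side is controlled by the SSRW Green function on a box of radius $\sqrt n$, whose summability is dimension-dependent. Optimizing $n$ so that $n\lambda$ dominates both $\sqrt n$ (martingale) and the square root of the displayed variance (environmental) produces three regime-dependent thresholds on $\lambda$, which coincide with the exponents $\alpha(d)\in\{3,\,5/2,\,2\}$ appearing in \refeq{LD}. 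The one-scale estimate is then upgraded to full ballisticity through Sznitman's renormalization scheme, which promotes a polynomial one-scale bound into $(P)_M$ at every scale and hence into \refeq{ball}.

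The main obstacle is the environmental fluctuation term: the decoupling of the local times $(L_n(y))$ from the environment $(d(y,\omega))_y$ requires a careful comparison with the SSRW, and the resulting bounds progressively degrade as the dimension drops and the SSRW Green function becomes less summable. In $d\ge 4$ the more strongly decaying Green function permits the weakest drift hypothesis, while $d=2$ yields only a marginal improvement over Lemma \ref{lemma1}. Making this local-time/environment decoupling precise, uniformly in $\epsilon$, is the technical heart of the argument.
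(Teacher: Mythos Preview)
The paper does not actually prove Theorem~\ref{theorem2}: for $d=2$ it simply points to Lemma~\ref{lemma1} (Kalikow's criterion), and for $d\ge 3$ it gives a two-sentence heuristic of Sznitman's argument in \cite{sz03}. That heuristic is: at scale $\epsilon^{-1}$ the exit times of the perturbed walk from slabs are close to those of the simple symmetric random walk; a renormalization step then promotes this to the effective criterion on slabs of width $\epsilon^{-4}$, which yields ballisticity. Your plan shares the same architecture (an SSRW comparison at a first scale, followed by renormalization to $(P)_M$/the effective criterion), but the implementation you propose is different. Sznitman works with exit distributions and exit times from slabs and controls them by perturbative Green-function/resolvent estimates; you instead go through the pathwise martingale decomposition $X_n\cdot e_1=M_n\cdot e_1+S_n$ and a second-moment bound on $S_n-n\lambda=\sum_y L_n(y)\bigl(d(y,\omega)\cdot e_1-\lambda\bigr)$.

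The point you yourself flag is the real issue, and it is not cosmetic. The local times $L_n(y)$ are functionals of the \emph{same} environment whose centered drifts $d(y,\omega)\cdot e_1-\lambda$ appear in the sum, so the independence that would justify your displayed variance bound is not available as written. Your proposed fix --- ``coupling the quenched trajectory with SSRW on scales where the drift has not yet taken effect'' --- is the right intuition but is hard to make rigorous at the time scales you need: the Radon--Nikodym density of the RWRE path law with respect to SSRW is of order $(1+O(\epsilon))^n$ and is not controlled on the relevant horizon. Sznitman's route avoids this circularity entirely by never decoupling a random path from its own environment; he compares \emph{exit probabilities} from slabs of width $L\sim\epsilon^{-1}$ with their SSRW counterparts via a perturbation expansion of the slab Green function, which is where the dimension-dependent thresholds $\alpha(d)$ actually enter. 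If you want to keep the martingale/local-time route, the decoupling step needs a genuine argument (for instance, a change-of-measure bound valid up to the slab exit time, or a restriction to environments in a ``good'' event on which the quenched local-time profile is provably close to SSRW).
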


\medskip

\noindent
The case $d=2$ of Theorem \ref{theorem2} is contained in Lemma \ref{lemma1}
above. In dimensions $d\ge 3$, Theorem \ref{theorem2} was proved by Sznitman in Ref. \cite{sz03} and includes cases of random walks
in random environments which do not satisfy Kalikow's condition
\cite{k81,SZ99}.
Heuristically, its proof is based on showing that at scales of order $\epsilon^{-1}$ the
behavior of the random walk is not far in terms of exit times from slabs from the behavior
of a simple symmetric random walk. Then, through a renormalization type argument
it is possible to derive a Solomon type criterion (originally for $d=1$ \cite{so75}), called the
{\it effective criterion} \cite{sz02} for slabs of
order $\epsilon^{-4}$, which then implies ballisticity.

\medskip

\section{Velocity estimates on the very small drift
ballistic regime}
In view of Theorem \ref{theorem2}, it is natural to ask wether or not the velocity
expansion (\ref{vexp}) is still valid under the local drift condition  (LD)  for $d\ge 3$. 
 The following result proved by Laurent, Ram\'\i rez, Sabot and Saglietti in Ref. \cite{lrs16}, shows that as an upper bound, the expansion
(\ref{vexp}) is still valid for perturbations around the simple symmetric random walk.

\medskip

\begin{theorem}[Laurent-Ram\'\i rez-Sabot-Saglietti]
\label{theorem3} Let $d\ge 3$. Consider a random walk in random environment satisfying 
 the local drift condition  (LD). Then, for
every $\eta>0$ there is a constant $C_\eta>0$ such that

\[
0<v\cdot e_1\le \mathbb E[d(0,\omega)]\cdot e_1+ C_\eta
\epsilon^{\alpha(d)-\eta}.
\]

\end{theorem}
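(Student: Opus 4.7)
The plan is to start from Theorem \ref{theorem2}, which under (LD) delivers ballisticity together with the polynomial ballisticity condition $(P)_M$ needed to guarantee the representation of the invariant measure $\mathbb{Q}$ as $\lim_{\delta\to 1^-}\mu_\delta$ with $\mu_\delta$ as in (\ref{ftx}). The positivity $v\cdot e_1>0$ is then immediate, so the real work is the upper bound on $v\cdot e_1$.

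Combining the martingale identity $v=\int d(0,\omega)\,d\mathbb{Q}$ with the above representation yields
\[
v \;=\; \lim_{\delta\to 1^-}\frac{\sum_{x\in\mathbb{Z}^d}\mathbb{E}[g_\delta^\omega(0,x)\,d(0,t_x\omega)]}{\sum_{x\in\mathbb{Z}^d}\mathbb{E}[g_\delta^\omega(0,x)]},
\]
the denominator being $1/(1-\delta)$ independently of $\omega$. I would then expand the quenched Green function $g_\delta^\omega$ around the Green function $g_\delta^{p_0}$ of the simple symmetric random walk killed geometrically at rate $1-\delta$, using the resolvent identity $g_\delta^\omega=g_\delta^{p_0}+\epsilon\,g_\delta^{p_0}\,\Xi\,g_\delta^\omega$, where $\Xi$ is the multiplication-plus-shift operator built from the $\xi(x,e)$. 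Iterating produces a Neumann series in $\epsilon$ whose coefficients are convolutions of $g_\delta^{p_0}$ weighted by joint moments of the $\xi$. Independence of the $\xi(x,\cdot)$ across distinct sites, together with translation invariance and $\sum_x g_\delta^{p_0}(0,x)=1/(1-\delta)$, collapses the zeroth-order term to $\mathbb{E}[d(0,\omega)]/(1-\delta)$; after dividing by the denominator the leading contribution is exactly $\mathbb{E}[d(0,\omega)]$. The theorem therefore reduces to bounding the higher-order remainder by $C_\eta\epsilon^{\alpha(d)-\eta}$.

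The key obstacle, and the source of the dimension-dependent exponent $\alpha(d)$, is the control of this remainder. Each higher-order contribution is an $\epsilon^k$-weighted sum against a joint correlator of the $\xi$'s, which by independence forces the sites to cluster at only a few source points, reducing the relevant bounds to Watson-type sums $\sum_x(g_\delta^{p_0}(0,x))^m$. For $m=2$ these scale as $(1-\delta)^{-(4-d)/2}$ in $d=3$, logarithmically in $d=4$, and stay bounded for $d\ge 5$, which is the structural reason behind the three cases in $\alpha(d)$. To obtain a useful bound one cuts off $\delta$ at the drift-dependent scale $1-\delta\sim\epsilon^{\alpha(d)-\eta}$ and uses Sznitman's effective criterion together with regeneration tail bounds (available thanks to Theorem \ref{theorem2}) to justify passing from this truncated scale all the way to the limit $\delta\to 1^-$; the loss of $\eta$ in the exponent absorbs the logarithmic and sub-polynomial slack in these Green function sums. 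Matching the diffusive scale $(1-\delta)^{-1/2}$ with the ballistic scale set by the drift, and verifying that $g_\delta^\omega$ stays close enough to $g_\delta^{p_0}$ at these scales for the Neumann series to converge with a remainder of the right order, is the technical heart of the argument.

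Combining the zeroth-order identity with the remainder bound yields $v\cdot e_1\le\mathbb{E}[d(0,\omega)]\cdot e_1+C_\eta\epsilon^{\alpha(d)-\eta}$, which is the claim. The hardest step is the quantitative Green function comparison at the diffusive scale under a drift so small that the standard perturbative methods from Theorem \ref{theorem1} no longer converge; this is precisely where the polynomial ballisticity $(P)_M$ and the effective criterion from the proof of Theorem \ref{theorem2} enter to close the scheme.
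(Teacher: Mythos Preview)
Your approach is fundamentally different from the paper's, and it contains a genuine gap at exactly the step you flag as ``hardest''.

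The paper does \emph{not} prove Theorem~\ref{theorem3} via the invariant-measure representation $\mathbb Q=\lim_{\delta\to 1^-}\mu_\delta$ and a Neumann expansion of $g_\delta^\omega$ around $g_\delta^{p_0}$. Instead, it proceeds by a renormalization argument in the spirit of the polynomial ballisticity condition of Berger--Drewitz--Ram\'\i rez: one classifies boxes $B_{l,L}$ as good or bad according to the quenched exit probabilities, and the key modification (relative to the ballisticity proof) is to keep track of the \emph{time} the walk spends in each box of the renormalization scheme. Combining this with the multiscale input from Sznitman's proof of Theorem~\ref{theorem2} yields the velocity upper bound directly, without ever writing $v$ as a Green-function series.

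The gap in your proposal is the convergence of the Neumann series at the scale you choose. You expand $g_\delta^\omega=g_\delta^{p_0}+\epsilon\,g_\delta^{p_0}\,\Xi\,g_\delta^\omega$ and iterate, but the effective expansion parameter is of order $\epsilon\cdot\|g_\delta^{p_0}\|$, and $\|g_\delta^{p_0}\|$ on $\ell^\infty$ is $(1-\delta)^{-1}$. You then set $1-\delta\sim\epsilon^{\alpha(d)-\eta}$ with $\alpha(d)\ge 2.5$, so $1-\delta\ll\epsilon$ and the series is in a divergent regime; even exploiting that $\Xi$ acts through discrete gradients only buys you one power of the spatial scale, not enough to restore convergence. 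The paper explicitly warns that this Green-function method ``produces badly converging series when the perturbation is around a $p_0\in\mathcal P_0$ which has a vanishing local drift'', which is precisely the (LD) setting. Your appeal to $(P)_M$ and the effective criterion to ``close the scheme'' does not address this: those tools control exit probabilities and regeneration tails, not the operator-norm blowup in the resolvent expansion. Without a concrete mechanism to tame the divergent series (or to bypass it), the argument does not go through, and this is why the paper's proof takes the renormalization-with-time-tracking route instead.
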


\medskip

\noindent The proof of Theorem \ref{theorem3} 
 combines elements of the proof of Theorem \ref{theorem2}, with
renormalization type arguments similar in spirit to 
the one used to establish ballisticity of random walks
in random environments satisfying the  polynomial
ballisticity condition introduced in Ref. \cite{bdr14} and which
we define below.

Let $l\in\mathbb S^{d-1}$, $L>0$ and consider the box

\[
B_{l,L}:=R\left((-L,L)\times\left(-70 L^3,70L^3\right)^{d-1}\right)\cap
\mathbb Z^d
\]
where $R$ is a rotation in $\mathbb R^d$ fixing the origin and
defined by $R(e_1)=l$. Given $M\ge 1$ and $L\ge 2$, we say
that the {\it polynomial condition $(P)_M$
in direction $l$ is satisfied on a box of size $L$}
(also written $(P)_M|l$) if

\begin{equation}
\label{last}
P_0(X_{T_{B_{l,L}}}\cdot l<L)\le \frac{1}{L^M}.
\end{equation}
This condition was introduced Ref.  \cite{bdr14}, showing that
whenever the environment is i.i.d. and uniformly elliptic
and the polynomial condition is satisfied
 with $M\ge 15d+5$ on a box of size $L\ge c_0$ with

\[
c_0:=\frac{2}{3}2^{3(d-1)}\land\exp\left\{2\left(\ln 90+\sum_{j=1}^\infty
\frac{\ln j}{2^j}\right)\right\},
\]
 the random walk is necesarilly ballistic
and the decay in (\ref{last}) is actually 
at least $e^{-L^\gamma}$ for all $\gamma\in (0,1)$. The proof of this statement
involves a renormalization argument, where boxes are classified 
as {\it bad} or {\it good} according to the size of
the quenched probability that the random walk exits the box through
sides whose normal direction is not in the hyperspace
defined by $l$. A modification of this argument which keeps
track of the time spent on each box by the random walk, gives
the velocity estimate of Theorem \ref{theorem3}.

\section*{Acknowledgments} Supported by
Fondo Nacional de Desarrollo Cient\'\i fico y Tecnol\'ogico
grant 1141094 and Iniciativa Cient\'\i fica Milenio NC120062,


\begin{thebibliography}{10}

\bibitem{bdr14} N.~Berger, A.~Drewitz and A.~F. Ram\'\i rez, 
Effective polynomial ballisticity condition for random walk in 
random environment, {\it Comm. Pure Appl. Math.} {\bf 67}, 1947-1973 (2014). 

\bibitem{cr15} D.~Campos and A.~F. Ram\'\i rez,
 Asymptotic expansion of the invariant measure for
ballistic random walk in the low disorder regime,
to appear in Ann. Probab.
  arXiv:1511.02945 (2015).

\bibitem{dr14} A.~Drewitz and A.~F. Ram\'\i rez,
 Selected topics in random walks in random environment,
{\it Topics in percolative and disordered systems}, 23-83, Springer Proc. Math. Stat.  {\bf 69}, Springer, New York (2014).


\bibitem{k81} S.~Kalikow, Generalized random walk
in random environment, {\it Ann. Probab.} {\bf 9}, 753-768 (1981).

\bibitem{lrs16} C.~Laurent, A.~F. Ram\'\i rez, C.~Sabot and S.~Saglietti,
 Velocity estimates for symmetric random walks at low ballistic disorder.
arXiv:1701.06308 (2017).

\bibitem{McW40} W.~H. McCrea and  F.~J.W. Whipple, Random paths in
two and three dimensions, {\it Proc. Roy. Soc. Edinburgh} {\bf 60}, 281--298
(1940).


\bibitem{sa04} C.~Sabot,
 Ballistic random walks in random environment at low disorder,
  {\it Ann. Probab.} {\bf 32}, 2996-3023 (2004).


\bibitem{so75} F.~Solomon, Random walks in random environment, 
{\it Ann. Probab.} {\bf 3}, 1-31
(1975).

\bibitem{sp64} F.~Spitzer, 
 Principles of random walk,
 Springer-Verlag (1964).


\bibitem{sz02} A.~S. Sznitman, An effective criterion
for ballistic behavior of random walks in random environment,
{\it  Probab. Theory Relat. Fields} {\bf 122}, 509-544 (2002).

\bibitem{sz03} A.~S. Sznitman, On new examples
of ballistic random walks in random environment,
{\it  Ann. Probab.} {\bf 31}, 285-322 (2003).


\bibitem{SZ99} A.~S. Sznitman and M.~Zerner, 
A law of large numbers for random walks in random environment, 
{\it Ann. Probab.} 
{\bf 27}, 1851-1869 (1999).


\bibitem{z06} O.~Zeitouni,
 Random walks in random environments,
{\it J. Phys. A} {\bf 39}, no. 40, R433-R464 (2006). 
\end{thebibliography}
\end{document}